\newtheorem{theorem}{Theorem}[section]
\newtheorem{corollary}[theorem]{Corollary}
\newtheorem{proposition}[theorem]{Proposition}
\theoremstyle{definition}
\newcommand{\Co}{\ensuremath{\mathbb{C}}}
\newcommand{\smc}{\ensuremath{\mathcal{S}}}
\def \< {\langle}
\def \> {\rangle}
\begin{document}

\title[Character tables and finite projective planes]{Character tables and the problem of existence of finite projective planes}

\author[M. Matolcsi]{M\'at\'e Matolcsi}
\address{M.M.: Budapest University of Technology and Economics (BME),
H-1111, Egry J. u. 1, Budapest, Hungary (also at Alfr\'ed R\'enyi Institute of Mathematics,
Hungarian Academy of Sciences, H-1053, Realtanoda u 13-15, Budapest, Hungary)}
\email{matomate@renyi.hu}

\author[M. Weiner]{Mih\'aly Weiner}
\address{M.W.: Budapest University of Technology and Economics (BME),
H-1111, Egry J. u. 1, Budapest, Hungary}
\email{mweiner@math.bme.hu}

\thanks{M. Matolcsi was supported by the ERC-AdG 321104, M. Weiner was supported by the ERC-AdG 669240 QUEST ``Quantum Algebraic Structures and Models'' and by OTKA Grant NKFI K 124152}


\begin{abstract}
Recently, the authors of the present work (together with M.\! N.\! Kolountzakis) introduced a new version of the non-commutative Delsarte scheme and applied it to the problem of mutually unbiased bases. Here we use this method to investigate the existence of a finite projective plane of a given order $d$. In particular, 
a short new proof is obtained for the nonexistence of a projective plane of order 6. For higher orders like 10 and 12, the method is non decisive but could turn out to give important supplementary informations.
\end{abstract}

\maketitle

\bigskip

\noindent
{\bf MSC 2010:} Primary 05B10,
Secondary 20C15, 05B25. 

\noindent
{\bf Keywords:} {\it  non-commutative Delsarte scheme, character table, 
finite projective planes}

\section{Introduction}

Given a group $G$ and a ``forbidden set'' $A=A^{-1}\subset G$, at most how many elements a subset $B=\{b_1,\ldots b_n\}\subset G$ can have, if all ``differences''$b_j^{-1}b_k$ ($j\neq k$) ``avoid'' $A$? This is a very general type of question; many famous problems can be re-phrased in this manner.

A method that often proved to be fruitful when dealing with such problems 
--- e.g.\! in the context of sphere-packing \cite{KabLev,cohnelkies} or in the maximum number of code-words \cite{delsarte} ---
is the Fourier-analytical approach first pioneered by Delsarte \cite{delsarte} in the commutative case. Later Oliveira Filho and Vallentin proved the optimization bound \cite[Theorem 2]{vallentinfilho}, which can be viewed as a generalization of Delsarte's method that includes the non-commutative case. Recently, together with Kolountzakis, the authors of this work presented another non-commutative Delsarte-scheme \cite{mi}, which they applied to the famous quantum physically motivated question regarding the existence of complete collections of mutually unbiased bases in $\Co^d$. Since there are many indications pointing to some relation between this question and that of the existence of a projective plane of order $d$, it seems natural to also re-phrase and investigate this latter problem in the above manner.

For any prime-power order, at least one projective plane can be constructed using finite fields. However, although there are also other constructions, so far no one has managed to present a projective plane with a non prime-power order and in fact it is widely believed that there are no such planes. In the beginning of the XX.\! century, Tarry \cite{tarry} proved that there is no $6\times 6$ Greco-Latin square, which is actually a stronger statement than the nonexistence of a finite projective plane of order $6$. However, his proof is not so instructing, as it is based on a rather tedious hand-checking of each $6\times 6$ Latin square. Then some 40 years later, Bruck and Ryser \cite{bruckryser} proved that if a finite projective plane of order $d \equiv 1,2$ mod$(4)$ exists, then $d$ must be a sum of two squares. This again rules out the existence of a finite projective plane of order $6$ (and of course many other orders, too), but leaves open the question for the next two non prime-powers: $d=10$ and $d=12$. For $d=10$ we only know the nonexistence because of a massive computer search \cite{LamThiefSwiercz}, and for $d=12$, the question is still open.

Although the argument of Bruck and Ryser is elegant, their proof is certainly not one of those that can be digested in 5 minutes. Thus, up to now, even for the smallest non prime-power order --- i.e.\! for $d=6$ --- there was no easily presentable, short argument for the nonexistence in question; hence we felt that there was room here for yet another proof.

Let us shortly see now how the existence of projective planes of a given order $d$ can be investigated in a Delsarte scheme. Instead of finite projective planes, one may work with some equivalent structures like that of finite affine planes or complete sets of mutually orthogonal Latin squares. For our purpose we shall depart from a finite affine plane of order $d$. We fix and enumerate the lines of two of its parallel equivalence classes so that we have a ``coordinate system'' on our plane. As any further line intersects each ``horizontal'' and ``vertical'' line exactly once, we can view each such line as the graph of a bijective function $\{1,2\ldots d\}\to \{1,2\ldots d\}$; that is, an element of the permutation group $S_d$. In this way, the remaining $(d-1)d$ lines of the affine plane are encoded in $(d-1)d$ permutations $\sigma_1,\sigma_2,\ldots \sigma_{(d-1)d}\in S_d$. Then $\sigma_j$ and $\sigma_k$ stand for two (different) parallel (i.e.\! non-intersecting) lines if and only if $\sigma_j^{-1}\sigma_k$ has no fixed points, whereas if they are from different parallel classes, then --- as they must intersect in exactly one point --- $\sigma_j^{-1}\sigma_k$ must have precisely one fixed point.
So one may ask: at most how large a subset $B\subset S_d$ can be so that the difference $\sigma^{-1}\tilde{\sigma}$ between any two different elements $\sigma,\tilde{\sigma}\in B$ ``avoids'' the forbidden set $A$ consisting of permutations with more than one fixed points? As we have seen, if this number is less then $(d-1)d$, then there can be no projective plane of order $d$. 

Note that actually we have more information than just the fact that the differences must avoid the forbidden set $A$. If we assume that $B=\{\sigma_1,\ldots \sigma_{(d-1)d}\}\subset S_d$ comes from an affine plane of order $d$, then, apart from saying that the differences are not in $A$, we can even tell how many of the differences have one fixed point and how many of them have zero. In our proof of nonexistence, we shall also make use of such finer details.

The Delsarte scheme recently presented in \cite{mi} involves general positive (semi)definite functions on $G$. Now in our present case --- in contrast with the cited work --- the forbidden set $A$ is invariant under conjugations. As a consequence, it  suffices to consider positive semidefinite functions that are also {\it class-functions}; i.e.\! ones that take constant values inside each conjugacy class. However, using representation theory, it is a rather easy exercise to show that if $G$ is a finite group and $h:G\to \Co$ is a class-function, then $h$ is positive semidefinite if and only if it is a linear combination of the irreducible characters of $G$ with nonnegative coefficients only. For this reason --- and also for self-containment --- rather than recalling the mentioned recently introduced general method (involving positive semidefinite functions), here we shall give a presentation directly formulated in terms of class-functions and characters.  

Assuming that $B=\{\sigma_1,\ldots \sigma_{(d-1)d}\}\subset S_d$ comes from an affine plane of order $d$, --- as we shall see in the next section --- our method gives a system of linear equations and inequalities regarding the number of differences (between different elements of $B$) falling in each conjugacy class of $S_d$.
However, in general one can easily find some further conditions that are somehow not ``recognized'' by our method; e.g.\! that these numbers must be all even integers. 
So, although for $d=6$ our linear system does have a --- unique  --- solution, we could then easily conclude the nonexistence of a finite projective plane of order $6$.

This can all be done on a small piece of paper because of two reasons. First, because the character table of $S_d$ is well known (described by the the so-called {\it Murnaghan-Nakayama rule}; see e.g.\! in the book \cite{book}), second, because in the $d=6$ case, 
$S_d$ has only $11$ conjugacy classes and hence we have a rather small linear system  to solve. However, for $d=12$ for example, $S_d$ has already $77$ conjugacy classes and a similar computation by hand would be extremely cumbersome. Nevertheless, using a computer it easy to solve a linear programming problem even with hundreds of variables and equations / inequalities. 

So we went ahead and tried out what happens up to $d=12$. We found that up to $d=6$ there is a unique solution, but uniqueness breaks down starting from $d=7$ --- even though that up to equivalence, there is a unique projective plane of order seven \cite{pierce,hall}. Like for $d=6$, we also tried for $d=10$ and $12$ to add some further conditions using {\it ad hoc} considerations. However, unlike in the $d=6$ case, these were not sufficient to arrive to a contradiction. We still hope though that the information given by our method will turn out to be useful in the future even at higher orders.
 
\section{The method applied}

Let $G$ be a finite group with conjugacy classes $C_0=\{e\},C_1\ldots C_r$ and let 
$\gamma$ be the function assigning to each element the cardinality of the conjugacy class it is contained in:
$$
\gamma|_{C_k}\, =\, | C_k | \;\; (k=0,\ldots r).
$$
For a $B=\{b_1,\ldots b_n\}\subset G$ we shall consider the class-function $\theta_B$ counting the number of times that the difference between elements of $B$ falls in a certain conjugacy class; that is,
$$
\theta_B|_{C_k} \, = \,|\{(j,m)|b_j^{-1}b_m\in C_k\}| \;\; (k=0,\ldots r).
$$
Note that $\theta_B$ takes nonnegative values (actually: nonnegative {\it integer} values) only, $\theta_B(e)= |B| = n$ and as there are $n^2$ differences altogether, we also have that $\sum_{g\in G}\frac{\theta_B(g)}{\gamma(g)}= |B|^2=n^2$.
Apart from these obvious ones, our main observation is the following.
\begin{proposition}
For any character $\chi$ of $G$, the value of the scalar product 
$$
\langle \chi, (\theta_B/\gamma) \rangle \, \equiv\, \frac{1}{|G|}\sum_{g\in G} \overline{\chi(g)}\frac{\theta_B(g)}{\gamma(g)}
$$
between $\chi$ of $G$ and the class-function $\theta_B/\gamma$ is a nonnegative real. Hence $\theta_B/\gamma$  is a linear combination of the irreducible characters of $G$ with nonnegative coefficients only.
\end{proposition}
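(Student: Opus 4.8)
The plan is to recognise $\langle \chi,\theta_B/\gamma\rangle$, up to the positive factor $1/|G|$, as the squared Hilbert--Schmidt norm of one operator built from $B$ through a representation affording $\chi$; nonnegativity is then automatic, and the concluding sentence follows from the fact that the irreducible characters form an orthonormal basis of the space of class-functions.

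First I would pick a \emph{unitary} representation $\pi$ of $G$ with $\chi=\tr\pi$ — legitimate because every representation of a finite group is equivalent to a unitary one and the character is an equivalence invariant — and set $T:=\sum_{j=1}^n\pi(b_j)$. Unitarity gives $\pi(b_j)\adj=\pi(b_j^{-1})$, so
\[
\tr\bigl(T\adj T\bigr)=\sum_{j,m=1}^n\tr\bigl(\pi(b_j)\adj\pi(b_m)\bigr)=\sum_{j,m=1}^n\chi\bigl(b_j^{-1}b_m\bigr).
\]
The left-hand side is $\norm{T}_{\mathrm{HS}}^2$, hence a nonnegative real. On the right-hand side I would sort the $n^2$ index pairs $(j,m)$ according to the conjugacy class $C_k$ containing $b_j^{-1}b_m$; since $\chi$ is constant on each $C_k$ and, by definition, exactly $\theta_B|_{C_k}$ pairs land in $C_k$, the sum collapses to $\sum_{k=0}^r\theta_B|_{C_k}\,\chi|_{C_k}$.

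Next I would evaluate the scalar product class by class: because $\theta_B$, $\gamma$ and $\chi$ are all constant on each $C_k$ and $\gamma|_{C_k}=|C_k|$,
\[
\langle \chi,\theta_B/\gamma\rangle=\frac{1}{|G|}\sum_{k=0}^r|C_k|\;\overline{\chi|_{C_k}}\;\frac{\theta_B|_{C_k}}{|C_k|}=\frac{1}{|G|}\sum_{k=0}^r\overline{\chi|_{C_k}}\;\theta_B|_{C_k}.
\]
The numbers $\theta_B|_{C_k}$ are real (nonnegative integers, in fact) and $\tr(T\adj T)$ is real, so $\sum_k\overline{\chi|_{C_k}}\,\theta_B|_{C_k}=\overline{\sum_k\chi|_{C_k}\,\theta_B|_{C_k}}=\overline{\tr(T\adj T)}=\tr(T\adj T)$, whence $\langle\chi,\theta_B/\gamma\rangle=\tfrac{1}{|G|}\norm{T}_{\mathrm{HS}}^2\ge 0$. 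For the last assertion, $\theta_B/\gamma$ is itself a class-function (a ratio of two functions constant on conjugacy classes), so it expands in the orthonormal basis $\{\chi_i\}$ of irreducible characters as $\theta_B/\gamma=\sum_i\langle\chi_i,\theta_B/\gamma\rangle\,\chi_i$, and each coefficient $\langle\chi_i,\theta_B/\gamma\rangle$ is $\ge 0$ by what was just shown.

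There is no serious obstacle here: the content is the identity $\tr(T\adj T)=\sum_k\theta_B|_{C_k}\,\chi|_{C_k}$, and the only points requiring a little care are the reduction to a unitary $\pi$ (so that $\pi(b_j)\adj=\pi(b_j^{-1})$), the bookkeeping that turns $\sum_{j,m}\chi(b_j^{-1}b_m)$ into a sum over conjugacy classes with the right multiplicities, and the remark that reality of $\theta_B$ lets one discard the complex conjugate appearing in the definition of the scalar product. An equivalent route avoiding the conjugate altogether would instead invoke the symmetry $\theta_B(g^{-1})=\theta_B(g)$ together with $\chi(g^{-1})=\overline{\chi(g)}$.
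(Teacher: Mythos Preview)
Your proof is correct and is essentially the same as the paper's: both choose a unitary representation affording $\chi$, set $T=\sum_j\pi(b_j)$ (the paper writes $X=\sum_j U(b_j)$), and identify $|G|\,\langle\chi,\theta_B/\gamma\rangle$ with $\tr(T\adj T)\ge 0$ by grouping the $n^2$ terms $\chi(b_j^{-1}b_m)$ according to conjugacy class. Your only addition is a slightly more explicit justification for dropping the complex conjugate, which the paper handles by computing $|G|\,\overline{\langle\chi,\theta_B/\gamma\rangle}$ instead.
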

\begin{proof}
It is clearly enough to prove the first affirmation; as the irreducible characters form an orthonormed bases in the space of class-functions, the value of the scalar product in question is precisely the coefficient of $\theta_B/\gamma$ in this basis corresponding to $\chi$. Now let $U$ be the representation giving $\chi$. As $G$ is finite, we may safely assume that $U$ is actually a unitary representation. Then setting $X=\sum_{j=1}^n U(b_j)$, we have
\begin{eqnarray}
\nonumber
|G|\, \overline{\langle \chi, (\theta_B/\gamma) \rangle}  &=& \sum_{g\in G} \chi(g)\frac{\theta_B(g)}{\gamma(g)} =\sum_{k=0}^r 
\left(\chi\, \theta_B\right)|_{C_k} = 
\sum_{k=0}^r \chi|_{C_k} \, |\{(j,k)|b_j^{-1}b_k\in C_k\}|\\
\nonumber
&=& \sum_{j,m=1}^n \chi(b_j^{-1}b_m) = \sum_{j,m=1}^n {\rm Tr}(U(b_j^{-1}b_m))
\\ 
&=&
\sum_{j,m=1}^n {\rm Tr}(U(b_j)^* U(b_m)) = {\rm Tr}\left(X^*X\right)\geq 0,
\end{eqnarray}
showing the non-negativity of the scalar product in question.
\end{proof}

Now let us consider the case when $G=S_d$ and the subset $B=\{\sigma_1,\ldots\sigma_{(d-1)d}\}$ is given by an affine plane of order 
$d$ as explained in the introduction. Then of the total of $((d-1)d)^2$ differences
between the elements of $B$, $(d-1)d$ gives the identity (i.e.\! a difference between an element and itself), $(d-1)^2d$ are differences between elements corresponding to two (different) lines of the same parallel class and $(d-2)(d-1)d^2$ are differences
between elements corresponding lines of different equivalent classes. Thus, denoting by $\smc_{j}$ the collection of conjugacy classes of $S_d$ containing permutations with $j=0,1,\ldots d$ fixed points, we have the linear equations
\begin{equation}
\label{equalities}
\left\{
\begin{array}{rcc}
\forall C \notin (\{e\}\cup \smc_0\cup \smc_1):\;\;\;\theta_B|_C &=& 0,
\vspace{1mm}
\\
\theta_B(e) & = & (d-1)d ,
\vspace{1mm}
\\
\displaystyle\sum_{C\in \smc_0}\theta_B|_C &=& (d-1)^2d, 
\\
\displaystyle\sum_{C\in \smc_1}\theta_B|_C&=&(d-2)(d-1)d^2
\end{array}
\right.
\end{equation}
as well the linear inequalities given by our previous proposition and the noted fact that $B$ takes nonnegative values only:
\begin{equation}
\label{inequalities}
\left\{
\begin{array}{rcc}
\forall C:\;\;\;\theta_B|_C &\geq & 0, \\
\forall \chi \,{\textrm{irr. char.}}:\;\;\; \displaystyle\sum_{C} (\chi \theta_B)|_C &\geq & 0.
\end{array}
\right.
\end{equation}
We view this linear system as a restriction on possible $\theta_B$ functions. (Note that we dropped the conjugation signs as both $\theta_B$ and $\chi$ are real-valued functions\footnote{A particular ``feature'' of the permutation group is that all of its characters are real-valued.}.) 

Let us consider now in particular the case when $d=6$. $S_6$ has $11$ conjugacy classes, of which $2$ are in $\smc_1$ and $4$ in $\smc_0$. Since we do not need permutations with $2,3$ or $4$ fixed points (as $\theta_B$ is constant zero over them), the following shortened version of the character table of $S_6$ will be sufficient for us:

\begin{center}
$\hphantom{aaaaaa}
\overbrace{\hphantom{aaaaaaaaaaaaaaaaaaaaaaaaaaaaaaaaaaaaaaa}}^{\smc_0}
\hphantom{aa}
\overbrace{\hphantom{aaaaaaaaaaaaaaa}}^{\smc_1}$
\begin{tabular}{r||c||c|c|c|c||c|c}
& e & (123)(456) & (12)(34)(56) & (1234)(56) & (123456) & (123)(45) & (12345) \\
\hline
$\chi_1$ & 1 & 1 & 1 & 1 & 1 & 1 & 1 \\
$\chi_2$ & 1 & 1 & -1 & 1 & -1 & -1 & 1 \\
$\chi_3$ & 5 & -1 & -1 & -1 & -1 & 0 & 0 \\
$\chi_4$ & 5 & -1 & 1 & -1 & 1 & 0 & 0 \\
$\chi_5$ & 5 & 2 & 3 & -1 & 0 & -1 & 0 \\
$\chi_6$ & 5 & 2 & -3 & -1 & 0 & 1 & 0 \\
$\chi_7$ & 9 & 0 & 3 & 1 & 0 & 0 & -1 \\
$\chi_8$ & 9 & 0 & -3 & 1 & 0 & 0 & -1 \\
$\chi_9$ & 10 & 1 & 2 & 0 & -1 & 1 & 0 \\
$\chi_{10}$ & 10 & 1 & -2 & 0 & 1 & -1 & 0 \\
$\chi_{11}$ & 16 & -2 & 0 & 0 & 0 & 0 & 1 
\\
\hline
$\theta_B$ & 30 & $x$ & $y$ & $z$ & $v$ & $a$ & $b$ 
\end{tabular}
$\hphantom{aaaaaa}
\underbrace{\hphantom{aaaaaaaaaaaaaaaaaaaaaaaaaaaaaaaaaaaaaaa}}_{150}
\hphantom{aa}
\underbrace{\hphantom{aaaaaaaaaaaaaaa}}_{720}$
\end{center}
In the last line we put (as parameters) the values of the function $\theta_B$, already indicating the system (\ref{equalities}) of equalities; namely, that $\theta_B(e)=5*6=30$, $x+y+z+v=5^2*6=150$ and $a+b=4*5*6^2=720$. What remains is to make use of the inequalities (\ref{inequalities}), which tell us that all parameters 
$x,y,z,v,a,b$ as well as the scalar product of the last line of the table (corresponding to $\theta_B$) with any other line is nonnegative.

In particular, considering that the sum of the lines corresponding to the characters $\chi_5,\chi_7,\chi_8$ and $\chi_{10}$ is $(33,3,1,1,1,-1,-1)$, we have the inequality:
\begin{equation}
33*30+3x+y+z+v-2a-2b = 990+2x +(x+y+z+v)-2(a+b)\geq 0.
\end{equation}
Thus, as $x+y+z+v=150$ and $a+b=720$, we
have that $990+2x+150-2*720\geq 0$ which results in $x\geq 150$. On the other hand,
$x$ is {\it at most} $150$, as $x+y+z+v=150$. Hence we must have 
$x=150$ and $y=z=v=0$.
Then the scalar product with the lines corresponding to $\chi_5$ and $\chi_7$ can be simplified resulting the inequalities
\begin{equation}
5*30+2*150-a\geq 0,\;\; 9*30-b\geq 0.
\end{equation}
Hence $a\leq 450$ and $b\leq 270$. But then we have no choice: these inequalities must actually be equalities, as $a$ and $b$ must add up to $720$. Thus the only possibility is:
\begin{equation}
\label{solution}
x=150, y=0, y=0, v=0, a=450,b=270.
\end{equation}
At this point we could just stop the investigation of our linear system, since in what follows we shall only use that this is how $\theta_B$ {\it should be}. However, we note that simple check shows that with the values given by (\ref{solution}), all scalar products are indeed positive and hence (\ref{solution}) is actually a solution.
\begin{corollary}
There exists no finite projective plane of order $6$.
\end{corollary}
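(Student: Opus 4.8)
The plan is to assume that a projective plane of order $6$ exists, feed it through the machinery of this section to determine $\theta_B$ completely, and then contradict a piece of fine combinatorial structure that the linear system does not see.

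First I would record the set-up. Passing to an affine plane of order $6$ and fixing a coordinate system as in the introduction, the remaining $30$ lines split into $5$ parallel classes $B_1,\dots,B_5\subset S_6$, each of size $6$. Two facts will be used: the six bijections forming one class $B_i$ have graphs partitioning the $6\times 6$ grid, so $B_i$ is a \emph{sharply transitive set} of degree $6$ (for every pair $(c,r)$ exactly one $\beta\in B_i$ has $\beta(c)=r$); and differences within a class are fixed-point-free, whereas cross-differences have exactly one fixed point. Hence $\theta_B$ for $B=B_1\cup\dots\cup B_5$ satisfies (\ref{equalities}) and (\ref{inequalities}), so by the computation already carried out above it must equal (\ref{solution}). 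In particular $y=z=v=0$, i.e.\ no within-class difference has cycle type $(2,2,2)$, $(4,2)$ or $(6)$; being fixed-point-free, \emph{every} within-class difference then has cycle type $(3,3)$. It is worth stressing that (\ref{solution}) passes all the cheap tests — its entries are nonnegative, even integers — so the contradiction must exploit more than the linear relations: that a parallel class is an actual sharply transitive set, not merely a multiset of differences of prescribed types.

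Accordingly I would isolate the following lemma: \emph{there is no sharply transitive set $T\subset S_6$ with $|T|=6$, $e\in T$, and every non-identity element of $T$ of cycle type $(3,3)$.} Granting it, put $T:=\sigma_1^{-1}B_1=\{e,\sigma_1^{-1}\sigma_2,\dots,\sigma_1^{-1}\sigma_6\}$; this is again sharply transitive (a left translate of $B_1$) and its non-identity elements are within-class differences, hence of type $(3,3)$ — contradicting the lemma, which completes the proof. To prove the lemma, pick $t\in T\setminus\{e\}$: since $t$ has type $(3,3)$, its two $3$-cycles are supported on a partition $\{1,\dots,6\}=P\sqcup Q$ into triples with $t(P)=P$. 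Both $e$ and $t$ send each $p\in P$ into $P$, to two distinct points, so by sharp transitivity the four elements of $T\setminus\{e,t\}$ send $p$ onto the other four points, only one of which lies in $P$. Summing over the three $p\in P$ gives $\sum_{t'\in T\setminus\{e,t\}}\#\{p\in P:\,t'(p)\in P\}=3$; as this is a sum of four nonnegative integers, some $t'$ maps $P$ into $Q$, i.e.\ interchanges $P$ and $Q$, whence every cycle of $t'$ has even length — impossible for cycle type $(3,3)$.

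The main obstacle is conceptual rather than technical: one must notice that the unique linear-programming solution, though it passes every obvious integrality and parity check, is nonetheless incompatible with the plane, and then locate the right obstruction. The pigeonhole/block-interchange argument is short, but it works precisely because the type is $(3,3)$ — three points on which $e$ and $t$ already occupy two of the three ``slots'', so the four remaining elements must fill a single slot over three points, forcing $3<4$; the analogous count for type $(4,2)$ or $(6)$ is inconclusive, which is why the exact values $y=z=v=0$, and not merely ``evenness'', are what is needed. The remaining points — that a parallel class is sharply transitive and that left translation preserves both this property and the within-class difference-type counts — are routine.
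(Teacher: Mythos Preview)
Your proof is correct, and it takes a genuinely different route from the paper's.

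Both arguments start identically: feed the hypothetical plane into the linear system to obtain the unique solution \eqref{solution}, and in particular $y=z=v=0$. From there they diverge. The paper exploits the value $a=450$ together with the sign character: $450$ odd differences forces exactly $15$ of the $30$ permutations to be even and $15$ to be odd, while $y=v=0$ makes every within-class difference even, so each parallel class has uniform parity and the number of even permutations must be a multiple of $6$ --- contradicting $15$. Your argument ignores $a$ and $b$ entirely and instead uses $y=z=v=0$ to conclude that every within-class difference has type $(3,3)$, then proves the self-contained lemma that no sharply transitive $6$-set in $S_6$ can consist of the identity and five $(3,3)$-elements, via the neat pigeonhole on the block $P$.

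What each buys: the paper's parity trick is a one-liner once the solution is in hand and needs nothing beyond counting signs, but it is a genuinely global argument --- it requires all five classes simultaneously (to get $a=450$) to reach the $15/15$ split. Your argument is a bit longer but more local: it shows that already a \emph{single} parallel class with the prescribed difference types is impossible, so in effect you are proving a stronger obstruction. Your observation that a parallel class is sharply transitive is the extra structural input that the Delsarte scheme does not see, playing the same role that the paper's divisibility-by-$6$ does.
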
 
\begin{proof}
Assume that there exists a projective --- and hence also an affine --- plane of order $d=6$. Then, as explained there should exists a collection of $5*6=30$ permutations $B=\{\sigma_1,\ldots \sigma_{30}\}\subset S_6$ describing the lines of the ``last'' $d-1=5$ parallel classes of our affine plane, with corresponding ``difference-counting'' function $\theta_B$ given by (\ref{solution}). In particular, 
out of the total of $900$ differences, $450$ should be of negative parity. This is only possible, if half of our permutations (i.e.\! $15$ out of the $30$) are of positive, and half are of negative parity (forming $30*15=450$ ordered pairs with opposing signs); in any other case there would be fewer differences of negative parity. However, as $y=v=0$, all differences with zero fixed points are of positive parity and hence the permutations corresponding to the lines of a single parallel class should have the same sign. Thus, the number of elements in $B$ with positive parity should be divisible by $6$ (as each parallel class contains $6$ lines), which contradicts to what we established earlier; namely, that precisely $15$ of the elements of $B$ should have positive parity. 
\end{proof}

\end{document}